 \newtheorem{thm}{Theorem}[section]
 \newtheorem{cor}[thm]{Corollary}
 \newtheorem{lem}[thm]{Lemma}
 \newtheorem{prop}[thm]{Proposition}
 \newtheorem{defn}[thm]{Definition}
 \newtheorem{ex}{Example}
\begin{document}

\title{Non-disjoint strong external difference families can have any number of sets}

\author{Sophie Huczynska* and Siaw-Lynn Ng**}
\date{* School of Mathematics and Statistics, University of St Andrews,\\
St Andrews, Fife, KY16 9SS,UK, sh70@st-andrews.ac.uk \\  ** Information Security Group, Royal Holloway, University of London, Egham, Surrey, TW20 0EX, UK, S.Ng@rhul.ac.uk}
\maketitle

\begin{abstract}
Strong external difference families (SEDFs) are much-studied combinatorial objects  motivated by an information security application. A well-known conjecture states that only one abelian SEDF with more than 2 sets exists.  We show that if the disjointness condition is replaced by non-disjointness, then abelian SEDFs can be constructed with more than 2 sets (indeed any number of sets). We demonstrate that the non-disjoint analogue has striking differences to, and connections with, the classical SEDF and arises naturally via another coding application.
\end{abstract}

\maketitle

\section{Introduction}

Difference families are long-studied combinatorial objects, with many applications.  A family of (not necessarily disjoint) $k$-sets in a group $G$ is a difference family if the multiset of pairwise internal differences (between distinct elements of each set) comprises each non-identity element of $G$ $\lambda$ times.  External difference families (EDFs) were introduced in \cite{OgaKurStiSai}, motivated by the information security problem of constructing optimal secret sharing schemes; the mathematical link between EDFs and algebraic manipulation detection (AMD) codes was formalized in \cite{PatSti}.  EDFs are a generalisation of difference families, in which the pairwise external differences (between distinct sets) are considered; the inter-set differences correspond to possible manipulations of an encoded message.  The sets are disjoint (to ensure unique decoding) and the multiset of external differences comprises each non-identity element of $G$ $\lambda$ times (the identity is ignored as it would correspond to ``no manipulation").  

An EDF is an optimal example of a weak AMD code (\cite{PatSti}).  There is a stronger security model (strong AMD code) which motivates the definition of a strong external difference family (SEDF) (\cite{PatSti}).  An SEDF is an EDF such that, for any set in the family, the pairwise differences between elements of this set and those of all other sets in the family comprise each non-identity element $\lambda$ times.  An SEDF is an EDF, but not every EDF is an SEDF.

Many constructions for SEDFs exist in the combinatorial literature (see \cite{BaJiWeZh, HucJefNep, JedLi, Wen}).  Surprisingly, all known infinite families have two sets, and just one example is known with more than two sets (having 11 sets in a group of size 243, obtained independently in \cite{JedLi} and \cite{Wen}).  It is conjectured in \cite{Leung1} that no other abelian SEDF exists with more than two sets; many theoretical results and computational searches support this (eg \cite{JedLi, Leung1, Leung2, MarSti}).  It is also notable that the only known infinite families with fixed $\lambda$ have $\lambda=1$.

In this paper we introduce the non-disjoint analogue of the SEDF (the sets are no longer disjoint and the multiset condition requires $\lambda$ occurences of every group element), and demonstrate that an abelian infinite family exists whose members can have any number of sets.  We also obtain infinite families of two-set non-disjoint SEDFs with any fixed frequency value $\lambda \in \mathbb{N}$.  For $\lambda=1$, we show that our two-set construction corresponds to a known two-set construction for classical SEDFs, and indicate why our non-disjoint constructions do not yield classical SEDFs with $\lambda>1$ or more than two sets.

The type of non-disjoint SEDFs which we construct satisfies a stronger condition on their external difference properties, namely that the multiset of external differences between any pair of distinct sets in the family comprises each element of $G$ precisely $\lambda$ times.  We call these \emph{pairwise strong external difference families} (PSEDFs).  Each PSEDF is a non-disjoint SEDF but not vice versa.   We demonstrate that PSEDFs are useful and indeed optimal for a different  application in communications theory, that of optical orthogonal codes and conflict-avoiding codes (\cite{ChuSalWei}, \cite{ShuWonChe}).  Here, the external differences between sets correspond to cross-correlation of binary sequences; there is no requirement for disjointness and the identity is treated in the same way as any other group element (since collision with the zero-shift of another sequence is just as significant as collision with any other shift).

\section{Background}
\label{sec:background}

All groups are written additively.  For two sets $A,B$ in a group $G$, define the multisets $\Delta(A,B)=\{a-b: a \in A, b \in B\}$ and $\Delta(A)=\{a_1-a_2: a_1,a_2 \in A, a_1 \neq a_2\}$. The notation $\lambda A$ denotes the multiset consisting of $\lambda$ copies of a set $A$. A translate of a set $A$ is denoted by $t+A=\{t+a:a \in A\}$.

The existing definition of an SEDF is as follows. 
\begin{defn}\label{def:SEDF}
Let $G$ be a group of order $v$ and let $m>1$.  A family of disjoint $k$-sets $\{A_1, \ldots, A_m\}$ in $G$ is a $(v,m,k,\lambda)$-SEDF if, for each $1 \leq i \leq m$, the multiset equation
$ \bigcup_{j \neq i} \Delta(A_i,A_j)= \lambda(G \setminus \{0\})$ holds.
\end{defn}

\begin{ex}\label{ex:PatSti}
$(\{0,1,\ldots,k-1\},\{k,2k,\ldots, k^2\})$ is a $(k^2+1,2,k,1)$-SEDF in $\mathbb{Z}_{k^2+1}$ (\cite{PatSti}).
\end{ex}

The following is a summary of known existence results for abelian SEDFs.
\begin{prop} \label{SEDFexistence}
A $(v,m,k, \lambda)$-SEDF exists in $G$ in the following cases:
\begin{itemize}
\item[(i)] $(v,m,k,\lambda)=(k^2+1, 2, k, 1)$, $G=\mathbb{Z}_{k^2+1}$ (\cite{PatSti}); 
\item[(ii)] $(v,m,k,\lambda)=(v, 2, \frac{v-1}{2}, \frac{v-1}{4})$, $v \, \equiv \, 1 \pmod 4$, $G$ contains a $(v,\frac{v-1}{2},\frac{v-5}{4},\frac{v-1}{4})$ partial difference set (\cite{HucPat}); 
\item[(iii)] $(v,m,k,\lambda)=(q,2,\frac{q-1}{4},\frac{q-1}{16})$, $q=16 t^2+1$ a prime power, $t \in \mathbb{Z}$, $G=(GF(q),+)$ (\cite{BaJiWeZh}); 
\item[(iv)] $(v,m,k,\lambda)=(p,2,\frac{p-1}{6},\frac{p-1}{36})$, $p=108 t^2+1$ is prime, $t \in \mathbb{Z}$, $G=(GF(p),+)$  (\cite{BaJiWeZh}).
\end{itemize}
\end{prop}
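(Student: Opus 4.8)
The plan is to establish the proposition as a compilation: for each of the four cases I would exhibit the explicit construction and verify the defining multiset equation of Definition~\ref{def:SEDF}, checking the cases independently. The key simplification shared by all four is that $m=2$, so for each $i$ the union $\bigcup_{j\neq i}\Delta(A_i,A_j)$ collapses to a single term, and the SEDF condition becomes the pair of multiset equations $\Delta(A_1,A_2)=\Delta(A_2,A_1)=\lambda(G\setminus\{0\})$. Working in the integral group ring $\mathbb{Z}[G]$, where $\Delta(A,B)$ is represented by $A\,B^{(-1)}$ with $B^{(-1)}=\sum_{b\in B}(-b)$, I would reduce each verification to a single group-ring identity, using the fact that whenever the sets are closed under negation the two equations coincide.

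For case (i) I would take the sets of Example~\ref{ex:PatSti}, namely $A_1=\{0,1,\dots,k-1\}$ and $A_2=\{k,2k,\dots,k^2\}$ in $\mathbb{Z}_{k^2+1}$, and compute $\Delta(A_1,A_2)$ directly: the differences $i-jk$ with $0\le i\le k-1$ and $1\le j\le k$ run, after reduction modulo $k^2+1$, over each of $1,\dots,k^2$ exactly once, giving $G\setminus\{0\}$, and the reverse equation follows since $\Delta(A_2,A_1)=-\Delta(A_1,A_2)$. For case (ii) I would take a partial difference set $D$ with parameters $(v,\tfrac{v-1}{2},\tfrac{v-5}{4},\tfrac{v-1}{4})$ and set $A_1=D$, $A_2=G\setminus(D\cup\{0\})$, so that disjointness is immediate. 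Since the parameters satisfy $\lambda=\tfrac{v-5}{4}\neq\tfrac{v-1}{4}=\mu$, any such partial difference set is automatically reversible ($D^{(-1)}=D$). Using the defining identity $D^2=\tfrac{v-1}{2}\,e+\tfrac{v-5}{4}\,D+\tfrac{v-1}{4}\,(G-e-D)$ in $\mathbb{Z}[G]$ (with $e$ the identity), I would expand $A_1A_2^{(-1)}=D(G-e-D)$ and check that the coefficient of $D$ vanishes while the remaining terms equal $\tfrac{v-1}{4}(G-e)$; since both $D$ and $A_2$ are reversible, $\Delta(A_2,A_1)=\Delta(A_1,A_2)$ and the single computation suffices.

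For the finite-field cases (iii) and (iv) I would use cyclotomy. Fixing a primitive element $\gamma$ of $GF(q)$ and the cyclotomic classes $C_i=\{\gamma^{es+i}\}$ of order $e=4$ (respectively $e=6$), I would take $A_1,A_2$ to be suitable unions of these classes, so that the frequency of a nonzero group element $g$ in $\Delta(A_1,A_2)$ is a fixed linear combination of the cyclotomic numbers $(i,j)_e$. The SEDF condition then amounts to showing that this combination is independent of $g$ and equal to $\lambda$. Here the arithmetic hypotheses enter decisively: $q=16t^2+1$ and $p=108t^2+1$ are precisely the representations $q=1+4(2t)^2$ and $p=1+3(6t)^2$, i.e.\ the case where the distinguished parameter in $q=a^2+4b^2$ (resp.\ $p=a^2+3b^2$) equals $1$, and it is this special value that forces the order-$4$ (resp.\ order-$6$) cyclotomic numbers into the configuration that makes the difference counts constant.

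The main obstacle is the cyclotomic evaluation in (iii) and (iv). The reductions to group-ring identities in (i) and (ii) are routine, but establishing the constancy of the cyclotomic-number combinations requires the explicit, and somewhat delicate, evaluation of these numbers, which is only possible under the stated diophantine conditions on $q$ and $p$. I would treat these evaluations as the technical heart of the argument, drawing on the standard cyclotomic-number tables, and regard the quadratic-form conditions $q=16t^2+1$ and $p=108t^2+1$ as exactly the constraints that force the required uniformity of the external-difference multiset.
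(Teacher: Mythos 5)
The paper itself offers no proof of this proposition: it is explicitly a summary of known existence results, with the proof burden delegated wholly to the cited references \cite{PatSti}, \cite{HucPat}, \cite{BaJiWeZh}. So your proposal necessarily takes a different route --- you reconstruct the constructions behind the citations, and your reconstruction is essentially faithful to what those papers do. Cases (i) and (ii) are correct and complete: for (i), the sets $\{0,1,\ldots,k-1\}$ and $\{k,2k,\ldots,k^2\}$ (Example \ref{ex:PatSti}) do yield each nonzero residue exactly once, and your observation that for $m=2$ one multiset equation implies the other (since $\Delta(A_2,A_1)=-\Delta(A_1,A_2)$ and $\lambda(G\setminus\{0\})$ is negation-invariant) legitimately halves the work; for (ii), the group-ring computation $D(G-e-D)=\tfrac{v-1}{2}G - D - D^2 = \tfrac{v-1}{4}(G-e)$ checks out (the coefficient of $D$ is $-1-\tfrac{v-5}{4}+\tfrac{v-1}{4}=0$), and your appeal to the standard fact that a PDS with $\lambda\neq\mu$ is reversible is exactly what makes a single computation suffice; this is the argument of \cite{HucPat}. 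For (iii) and (iv), however, your proposal remains a plan rather than a proof: you never specify which unions of cyclotomic classes to take (in \cite{BaJiWeZh} these are specific pairs of classes of order $4$, respectively $6$), and the evaluation of the cyclotomic numbers under $q=16t^2+1$ and $p=108t^2+1$ --- which you correctly identify as the quadratic-form conditions forcing the uniform difference counts, and which is where the entire content of those two results lives --- is deferred to tables rather than carried out. That division of labour is defensible for results the surrounding paper only cites, but it means your cases (iii) and (iv) are not self-contained in the way (i) and (ii) are.
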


There are very many non-existence results for SEDFs in abelian groups with more than two sets (\cite{Leung1}); we summarize some key ones:
\begin{prop} Let $G$ be abelian.
No $(v,m,k,\lambda)$-SEDF with $m>2$ exists if:
\begin{itemize}
\item[(i)] $m \in \{3,4\}$ (\cite{MarSti}); or
\item[(ii)] $v$ is a product of distinct primes and $\gcd(mk,v)=1$ (\cite{BaJiWeZh}); or
\item[(iii)] $G$ is cyclic of prime power order \cite{Leung1}; or
\item[(iv)] $v$ is a product of at most three (not necessarily distinct) primes, except possibly when $G=C_p^3$ and $p$ is a prime greater than $3 \times 10^{12}$ \cite{Leung1}; or
\item[(v)] $G$ has order $p^2$ (\cite{Leung2}).
\end{itemize}
\end{prop}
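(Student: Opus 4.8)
These five statements are established non-existence results, drawn respectively from \cite{MarSti} for (i), \cite{BaJiWeZh} for (ii), \cite{Leung1} for (iii) and (iv), and \cite{Leung2} for (v); accordingly the plan is not to reprove each from scratch but to exhibit the common character-theoretic framework on which they all rest and to indicate where each case draws on additional, case-specific input. First I would pass to the group ring $\mathbb{Z}[G]$, identifying each set $A_i$ with $\sum_{a \in A_i} a$ and writing $A_i^{(-1)} = \sum_{a \in A_i}(-a)$. With $D = \sum_{j=1}^m A_j$, the defining multiset equation $\bigcup_{j \neq i}\Delta(A_i,A_j) = \lambda(G \setminus \{0\})$ becomes
\[
A_i D^{(-1)} - A_i A_i^{(-1)} = \lambda\bigl(\mathbf{G} - e\bigr), \qquad 1 \leq i \leq m,
\]
where $\mathbf{G} = \sum_{g \in G} g$ and $e$ is the identity of $G$.

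The next step is to apply the characters of the abelian group $G$. The principal character returns the standard counting identity $k^2(m-1) = \lambda(v-1)$. For each nontrivial character $\chi$, since $\chi(\mathbf{G}) = 0$ and $\chi(e)=1$, the displayed equation collapses to
\[
\chi(A_i)\,\overline{\chi(D)} - |\chi(A_i)|^2 = -\lambda,
\]
holding for every $i$. Summing over $i$, and separately subtracting the relations for two indices $i, i'$, produces the two workhorse identities
\[
|\chi(D)|^2 - \sum_{i=1}^m |\chi(A_i)|^2 = -m\lambda, \qquad \overline{\chi(D)}\bigl(\chi(A_i) - \chi(A_{i'})\bigr) = |\chi(A_i)|^2 - |\chi(A_{i'})|^2.
\]
Because each $\chi(A_i)$ is an algebraic integer in the cyclotomic field $\mathbb{Q}(\zeta_{\exp(G)})$ whose absolute value is tightly pinned by these relations, one obtains rigid arithmetic constraints to exploit.

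From this shared platform each part proceeds by adding structure. For (i) the small values $m \in \{3,4\}$ make the finite system of character equations directly solvable, forcing a contradiction (\cite{MarSti}). Parts (ii)--(v) are number-theoretic: over a squarefree $v$ with $\gcd(mk,v)=1$, or over a cyclic group of prime power order, or a group of order $p^2$, one studies the factorisation of the ideals $(\chi(A_i))$ in the ring of integers of $\mathbb{Q}(\zeta_{\exp(G)})$ and plays divisibility off against the absolute-value bounds above until no valid solution remains (\cite{BaJiWeZh, Leung1, Leung2}). The principal obstacle is case (iv): eliminating SEDFs over groups whose order is a product of three primes, above all over $C_p^3$, requires the most delicate vanishing-sum and field-descent machinery, and even so a single potential family stubbornly survives for primes $p > 3 \times 10^{12}$, which is precisely the residual uncertainty recorded in the statement.
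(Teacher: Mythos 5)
The paper gives no proof of this proposition at all: it is a background survey statement, established entirely by the cited references \cite{MarSti, BaJiWeZh, Leung1, Leung2}, exactly as your proposal treats it. Your supplementary sketch of the common machinery is also accurate --- the group-ring identity $A_i D^{(-1)} - A_i A_i^{(-1)} = \lambda(\mathbf{G}-e)$, the principal-character count $k^2(m-1)=\lambda(v-1)$, and the nontrivial-character relations you derive are precisely the standard platform on which those papers build --- so your account is correct and consistent with the paper's treatment.
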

Other non-existence conditions for $m>2$ include: $\lambda \geq k$  (\cite{HucPat}); $\lambda>1$ and $\frac{\lambda(k-1)(m-2)}{(\lambda-1)k(m-1)}>1$  (\cite{HucPat}); $k|v$ (\cite{MarSti}); $\gcd(k,v-1)=1$ (\cite{JedLi}) and $v-1$ is squarefree (\cite{HucJefNep}).  In \cite{Leung2}, a result is given for groups of order $pq$ for $p$ sufficiently large.

We adapt the classical definition of SEDF in Definition \ref{def:SEDF} by removing the disjointness condition, so the identity becomes a valid external difference.  For consistency we require that the identity occurs at the same frequency as the other group elements. This in fact means the sets \emph{must} be non-disjoint, so this structure is genuinely distinct from the classical SEDF, i.e. it is the non-disjoint analogue rather than a generalisation. 
\begin{defn}
Let $G$ be a group of order $v$ and let $m>1$.  We say that a family of $k$-sets $\{A_1, \ldots, A_m\}$ in $G$ is a \emph{non-disjoint $(v,m,k,\lambda)$-SEDF} if, for each $1 \leq i \leq m$, the multiset equation
$ \bigcup_{j \neq i} \Delta(A_i,A_j)= \lambda G$ holds.
\end{defn}
This may be viewed as mathematically more ``natural" than the existing SEDF definition, as it treats every element of the group equally.

A non-disjoint SEDF consisting of two sets $\{A,B\}$ satisfies the condition that $\Delta(A,B)=\Delta(B,A)=\lambda G$.  This motivates the following definition of a non-disjoint structure with a stronger condition on its external differences.

\begin{defn}
Let $G$ be a group of order $v$ and let $m>1$.  We say a family of $k$-sets $\{A_1, \ldots, A_m\}$ in $G$ is a $(v,m,k,\lambda)$-PSEDF if, for each $1 \leq i \neq j \leq m$, the multiset $\Delta(A_i,A_j)$
comprises $\lambda$ copies of each element of $G$.
\end{defn}

\begin{thm}
\begin{itemize}
\item[(i)] A $(v,m,k,\lambda)$-PSEDF is a non-disjoint $(v,m,k,(m-1)\lambda)$-SEDF.
\item[(ii)] A non-disjoint $(v,2,k,\lambda)$-SEDF is a $(v,2,k,\lambda)$-PSEDF.
\end{itemize}
\end{thm}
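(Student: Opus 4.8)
The plan is to prove both parts by direct manipulation of the multiset equations, exploiting the definitions of PSEDF and non-disjoint SEDF. Both directions reduce to comparing multiset unions, so the key tool is simply that multiset union distributes over the defining conditions.

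For part (i), I would fix an index $i$ and compute the relevant multiset $\bigcup_{j \neq i} \Delta(A_i, A_j)$ that appears in the non-disjoint SEDF definition. By the PSEDF hypothesis, each individual $\Delta(A_i, A_j)$ with $j \neq i$ comprises exactly $\lambda$ copies of every element of $G$, i.e. $\Delta(A_i,A_j) = \lambda G$. Since there are $m-1$ indices $j$ with $j \neq i$, taking the multiset union over these gives
\begin{equation*}
\bigcup_{j \neq i} \Delta(A_i, A_j) = \bigcup_{j \neq i} \lambda G = (m-1)\lambda\, G,
\end{equation*}
which is precisely the condition for a non-disjoint $(v,m,k,(m-1)\lambda)$-SEDF. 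I would note that this holds uniformly for every $i$, so the SEDF condition is satisfied, and the parameters $v,m,k$ are unchanged while the frequency scales by $m-1$.

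For part (ii), I would simply specialise the non-disjoint SEDF condition to $m=2$. With only two sets $\{A_1, A_2\}$, the index set $\{j : j \neq i\}$ is a singleton for each $i$, so the union $\bigcup_{j \neq i} \Delta(A_i, A_j)$ collapses to a single term: for $i=1$ it is $\Delta(A_1, A_2)$, and for $i=2$ it is $\Delta(A_2, A_1)$. The non-disjoint SEDF condition therefore asserts $\Delta(A_1, A_2) = \lambda G$ and $\Delta(A_2, A_1) = \lambda G$, which is exactly the statement that $\Delta(A_i, A_j) = \lambda G$ for each ordered pair $i \neq j$ — i.e. the PSEDF condition with the same $\lambda$.

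Neither part presents a genuine obstacle, as both follow immediately once the definitions are unwound; the only point requiring a little care is the bookkeeping of multiset union versus ordinary union, ensuring that combining $m-1$ copies of $\lambda G$ yields $(m-1)\lambda\, G$ rather than $\lambda G$. I would state explicitly that all unions here are multiset unions (so multiplicities add), since this is what drives the scaling of $\lambda$ in part (i) and is the sole substantive distinction between the two parts.
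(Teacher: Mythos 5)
Your proof is correct and is exactly the routine definition-unwinding argument the paper itself relies on: the authors state this theorem without proof, treating it as immediate (note their remark just before the PSEDF definition that a two-set non-disjoint SEDF satisfies $\Delta(A,B)=\Delta(B,A)=\lambda G$). Your careful bookkeeping of multiset unions, giving $(m-1)$ copies of $\lambda G$ in part (i) and the collapse to single ordered-pair conditions in part (ii), fills in precisely the details the paper leaves implicit.
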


\begin{lem}\label{lem:relations}
\begin{itemize}
\item[(i)] For a non-disjoint $(v,m,k,\lambda)$-SEDF, $\lambda v=k^2(m-1)$.
\item[(ii)] For a $(v,m,k,\lambda)$-PSEDF, $\lambda v=k^2$.
\end{itemize}
\end{lem}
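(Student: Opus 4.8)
The plan is to prove both parts by a simple double-counting of the total number of external differences, exploiting the fact that every multiset equation in the two definitions equates a union of $\Delta$-multisets with a prescribed number of copies of $G$. The key quantitative observation is that for any two $k$-sets $A, B$ in $G$, the multiset $\Delta(A,B) = \{a - b : a \in A, b \in B\}$ contains exactly $|A|\cdot|B| = k^2$ elements, counted with multiplicity, since it is indexed by the ordered pairs $(a,b) \in A \times B$. I would record this cardinality fact at the outset, and then simply compare total sizes on each side of the defining multiset equations.

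For part (i), fix any index $i$ and consider the defining equation $\bigcup_{j \neq i} \Delta(A_i, A_j) = \lambda G$. The left-hand side is a union of $m-1$ multisets, one for each $j \neq i$, and each such $\Delta(A_i, A_j)$ has size $k^2$ by the observation above; hence the left-hand multiset has total size $(m-1)k^2$. The right-hand side $\lambda G$ consists of $\lambda$ copies of each of the $v$ elements of $G$, so it has total size $\lambda v$. Equating the two totals gives $\lambda v = k^2(m-1)$, as required.

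For part (ii), I would run the same argument on a single pair. By definition of a PSEDF, for distinct $i,j$ the multiset $\Delta(A_i, A_j)$ comprises $\lambda$ copies of each element of $G$, so it equals $\lambda G$ and has size $\lambda v$; but it also has size $k^2$ by the cardinality fact. Hence $\lambda v = k^2$. Alternatively, this can be deduced from part (i) together with the preceding theorem: a $(v,m,k,\lambda)$-PSEDF is a non-disjoint $(v,m,k,(m-1)\lambda)$-SEDF, and substituting the parameter $(m-1)\lambda$ into part (i) yields $(m-1)\lambda\, v = k^2(m-1)$, which cancels to $\lambda v = k^2$. I would likely present the direct count as the main argument and mention the second derivation as a consistency check.

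There is no substantive obstacle here: both parts reduce to matching the total multiplicities on the two sides of a multiset identity, and the only point requiring any care is the bookkeeping of multiplicities (treating all equalities as multiset equalities and counting $\Delta(A_i,A_j)$ by ordered pairs rather than by distinct difference values). Once that convention is fixed, the arithmetic is immediate.
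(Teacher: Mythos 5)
Your proof is correct: the double count of $\bigl|\Delta(A_i,A_j)\bigr| = k^2$ (with multiplicity, over ordered pairs) against $|\lambda G| = \lambda v$ is exactly the counting argument the paper relies on, which it considers immediate enough to state the lemma without proof. Both the direct count for part (ii) and the alternative derivation via the preceding theorem are valid, and your care in treating all equalities as multiset equalities is precisely the point that makes the argument go through.
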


\begin{ex}
In $\mathbb{Z}_{18}$, the sets $\{0,1,2,3,4,5\}$ and $\{0,1,6,7,12,13\}$ form an $(18,2,6,2)$-PSEDF which is a non-disjoint $(18,2,6,2)$-SEDF.
\end{ex}

A classical $(v,m,k,1)$-SEDF exists in an abelian group if and only if $m=2$ and $v=k^2+1$ or $k=1$ and $m=v$ \cite{PatSti}.  An analogous result holds for non-disjoint SEDFs (for these, by Lemma \ref{lem:relations}(i), $k=1$ cannot occur).
\begin{thm}
In an abelian group $G$, a non-disjoint $(v,m,k,1)$-SEDF exists if and only if $m=2$ and $v=k^2$.
\end{thm}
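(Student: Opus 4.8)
The plan is to prove the two directions separately, with the real content in the ``only if'' part. Throughout I work in the group ring $\mathbb{Z}[G]$ of the finite abelian group $G$: for $A\subseteq G$ write $\widehat{A}=\sum_{a\in A}a$ and $\widehat{A}^{-}=\sum_{a\in A}(-a)$, so that the multiset $\Delta(A_i,A_j)$ is encoded by $\widehat{A_i}\,\widehat{A_j}^{-}$ and the defining condition (with $\lambda=1$) reads $\sum_{j\neq i}\widehat{A_i}\,\widehat{A_j}^{-}=\widehat{G}$ for every $i$, where $\widehat{G}=\sum_{g\in G}g$. By Lemma~\ref{lem:relations}(i) we already know $v=k^2(m-1)$, so it suffices to force $m=2$, after which $v=k^2$ follows.

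For necessity I would apply each nontrivial character $\chi$ of $G$ to the identity $\widehat{A_i}\big(\sum_{j\neq i}\widehat{A_j}\big)^{-}=\widehat{G}$. Using $\chi(\widehat{G})=0$ and $\chi(\widehat{A}^{-})=\overline{\chi(\widehat{A})}$, and writing $S=\sum_{j}\widehat{A_j}$, this becomes $\chi(\widehat{A_i})\big(\overline{\chi(S)}-\overline{\chi(\widehat{A_i})}\big)=0$. Hence for each $i$ either $\chi(\widehat{A_i})=0$ or $\chi(\widehat{A_i})=\chi(S)$; if $r\ge 1$ of these values were nonzero then each would equal $\chi(S)$, so $\chi(S)\neq 0$, and summing the (mutually equal) nonzero values gives $\chi(S)=r\,\chi(S)$, forcing $r=1$. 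Thus for each nontrivial $\chi$ at most one index has $\chi(\widehat{A_i})\neq 0$, so for every pair $i\neq j$ at least one of $\chi(\widehat{A_i}),\chi(\widehat{A_j})$ vanishes, whence $\chi(\widehat{A_i}\,\widehat{A_j}^{-})=\chi(\widehat{A_i})\overline{\chi(\widehat{A_j})}=0$, while the principal character gives the value $k^2$.

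Since an element of $\mathbb{C}[G]$ is determined by its character values, this pins down $\widehat{A_i}\,\widehat{A_j}^{-}=\tfrac{k^2}{v}\widehat{G}$ exactly; equivalently, $\Delta(A_i,A_j)$ comprises every element of $G$ with the same multiplicity $k^2/v=1/(m-1)$. As a multiplicity is a non-negative integer, this forces $m-1=1$, i.e.\ $m=2$, and then $v=k^2$. (Reading off the multiplicity of $0$ recovers $|A_i\cap A_j|=1/(m-1)$, which also explains why the sets must in fact be non-disjoint.)

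For sufficiency, given $m=2$ and $v=k^2$ I would exhibit an explicit example in $G=\mathbb{Z}_{k^2}$: take $A=\{0,1,\dots,k-1\}$ and $B=\langle k\rangle=\{0,k,2k,\dots,(k-1)k\}$. Since $B$ is a subgroup of order $k$ and $A$ is a complete set of coset representatives, every $g\in G$ has a unique expression $g=a-b$ with $a\in A$, $b\in B$; hence $\Delta(A,B)=G$ and likewise $\Delta(B,A)=-G=G$, so $\{A,B\}$ is a non-disjoint $(k^2,2,k,1)$-SEDF. The crux of the argument is the character step of the second paragraph: establishing the ``at most one nonzero value per character'' dichotomy and passing from equality of all character values to the group-ring identity $\widehat{A_i}\,\widehat{A_j}^{-}=\tfrac{k^2}{v}\widehat{G}$. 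Once $\Delta(A_i,A_j)$ is known to be a uniform multiset of multiplicity $1/(m-1)$, the integrality obstruction makes $m=2$ immediate.
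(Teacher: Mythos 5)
Your proof is correct, but it takes a genuinely different route from the paper's. The paper argues elementarily by double counting: assuming $m\geq 3$, summing the $m$ defining conditions gives $\bigcup_{i\neq j}\Delta(A_i,A_j)=mG$, and stripping out the differences to and from $A_1$ leaves $(m-2)G$ covered by the remaining sets; then an internal difference $x-y$ of $A_1$ must also arise as an external difference $u-v$ with $u\in A_i$, $v\in A_j$, $i\neq j$, $i,j\neq 1$, whence $x-u=y-v$ is a repeated element of $\bigcup_{k\neq 1}\Delta(A_1,A_k)$, contradicting $\lambda=1$. Your argument instead runs through the group ring and characters: the dichotomy $\chi(\widehat{A_i})\in\{0,\chi(S)\}$ for each nontrivial $\chi$, the counting step forcing at most one nonzero value per character, and Fourier inversion pinning down $\widehat{A_i}\widehat{A_j}^{-}=\tfrac{k^2}{v}\widehat{G}$, after which integrality of $\tfrac{k^2}{v}=\tfrac{1}{m-1}$ kills every $m\geq 3$. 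This is heavier machinery (in the style of the character-theoretic nonexistence results of \cite{MarSti} cited in the paper), but it buys a stronger structural conclusion that the paper's proof does not give: every non-disjoint $(v,m,k,1)$-SEDF is automatically a PSEDF, with each pairwise multiset $\Delta(A_i,A_j)$ flat of multiplicity $k^2/v$, and in particular $|A_i\cap A_j|=1$ when $m=2$. For sufficiency, you and the paper use the same example — yours is exactly the pair of Corollary \ref{cor:lambda}(iii) — but where the paper cites Theorem \ref{theorem:Ex4.8}, you give a clean direct verification via the subgroup-and-transversal observation that every $g$ has a unique representation $a-b$ with $a\in A$, $b\in B$. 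Both arguments are complete; yours also dispenses with the paper's side remark that $k\geq 2$, since the integrality step handles $k=1$ uniformly.
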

\begin{proof}
Suppose $A_1,\ldots, A_m$ is a non-disjoint $(v,m,k,1)$-SEDF with $m \geq 3$. We have $k\geq 2$.  Then $\cup_{i \neq j} \Delta(A_i,A_j)=mG$; removing all differences to/from $A_1$, $\cup_{1 \neq i \neq j \neq 1} \Delta(A_i,A_j)=(m-2)G$.  Let $x,y \in A_1$, $x \neq y$.  Then $x-y$ must occur in $\Delta(A_i,A_j)$ for some $1 \neq i \neq j \neq 1$, i.e. $x-y=u-v (\neq 0)$ for some $u \in A_i$, $v \in A_j$, $i \neq j$.  But then $x-u=y-v$ occurs twice in $\cup_{k \neq 1} \Delta(A_1,A_k)$, a contradiction. Hence $m=2$. Theorem \ref{theorem:Ex4.8} establishes the reverse.
\end{proof}

When working in cyclic groups, we will use the following well-known correspondence with binary sequences (for background on sequences, see \cite[Section 5.4]{Men}). A \emph{binary sequence} of length $v$ is a sequence $X=x_0 \ldots x_{v-1}$ where each $x_i \in \{0,1\}$.  We also denote this by $X=(x_i)_{i=0}^{v-1}$.  We call a contiguous subsequence $x_{\delta} x_{\delta+1} \ldots x_{\delta+r-1}$ of $X$ a \emph{substring} of $X$ of length $r$. We take indices modulo $v$ unless otherwise stated.  The \emph{weight} of a binary sequence is the number of occurrences of the symbol $1$ in the sequence.  We call $X+s = (x_{i+s})_{i=0}^{v-1}$ a (cyclic) shift of $X$ by $s$ places.
\begin{defn}\label{def:set_sequence}
\begin{itemize}
\item[(i)] For a $k$-subset $A$ of $\mathbb{Z}_v=\{0\,\ldots,v-1\}$, we associate a binary sequence $X_A=(x_i)_{i=0}^{v-1}$ of weight $k$ whose $i$th  entry $x_i$ is $1$ if $i \in A$ and $0$ if $i \not\in A$.
\item[(ii)] For a binary sequence $X_A=(x_i)_{i=0}^{v-1}$ of weight $k$, we associate a $k$-subset $A$ of $\mathbb{Z}_v$, comprising all elements $i \in \{0,\ldots,v-1\}$ such that $x_i=1$.
\end{itemize}
\end{defn}
In $\mathbb{Z}_7$, the set $A=\{1,2,4\}$ corresponds to the sequence $0110100$.

Using this correspondence, we have the following useful relationship.

\begin{prop}\label{prop:basic}
Let $X_A=(x_i)_{i=0}^{v-1}$, $X_B=
(y_i)_{i=0}^{v-1}$ ($x_i, y_i \in \{0, 1\}$), with indices taken modulo $v$, be the sequences corresponding to $k$-subsets $A$ and $B$ in $\mathbb{Z}_v$.  Then:
\begin{itemize}
\item[(i)] For $\delta \in \{0,\ldots, v-1\}$, $\sum_{t=0}^{v-1} x_t y_{t+\delta}$ equals the number of occurrences of $\delta$ in $\Delta(B,A)$.
\item[(ii)] 
$\displaystyle{\sum_{t=0}^{v-1}} x_t y_{t+\delta} = \lambda$ for all $0 \le \delta \le v-1$ if and only if $\Delta(B,A)= \lambda \mathbb{Z}_n$
\end{itemize}
\end{prop}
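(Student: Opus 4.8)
The plan is to prove Proposition~\ref{prop:basic} directly from the definitions of the characteristic sequences in Definition~\ref{def:set_sequence} and the multiset $\Delta(B,A)$. The key observation is that the product $x_t y_{t+\delta}$ is a $0/1$ indicator that detects exactly one situation: both $t \in A$ (so $x_t = 1$) and $t+\delta \in B$ (so $y_{t+\delta} = 1$). First I would fix $\delta \in \{0,\ldots,v-1\}$ and rewrite the sum $\sum_{t=0}^{v-1} x_t y_{t+\delta}$ as a count of the index set $\{t : t \in A \text{ and } t+\delta \in B\}$, where all indices are read modulo $v$.

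Next I would connect this count to the multiset $\Delta(B,A) = \{b - a : b \in B,\, a \in A\}$. The central bijection is between ordered pairs $(a,b) \in A \times B$ with $b - a \equiv \delta \pmod v$ and indices $t$ with $t \in A$, $t+\delta \in B$: given such a pair, set $t = a$, so that $t+\delta = a+\delta = b \in B$; conversely, given such a $t$, set $a = t$ and $b = t+\delta$. This is manifestly a bijection, and the number of pairs $(a,b)$ with $b-a = \delta$ is by definition the multiplicity of $\delta$ in $\Delta(B,A)$. This establishes part~(i).

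For part~(ii), I would argue that the condition $\sum_{t=0}^{v-1} x_t y_{t+\delta} = \lambda$ holding for every $\delta \in \{0,\ldots,v-1\}$ says, via part~(i), precisely that each $\delta$ occurs exactly $\lambda$ times in $\Delta(B,A)$. Since $\delta$ ranges over all of $\mathbb{Z}_v$, this is the multiset equality $\Delta(B,A) = \lambda \mathbb{Z}_v$, and the equivalence follows immediately in both directions.

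This proof is essentially a bookkeeping argument, so I do not anticipate a genuine obstacle; the only point requiring care is the handling of indices modulo $v$, ensuring that the shift $t \mapsto t+\delta$ is read cyclically so that the correspondence covers the full residue system and no pair $(a,b)$ is missed or double-counted. A brief remark confirming that the map $t \mapsto (t, t+\delta)$ is a genuine bijection onto the relevant pairs suffices to close the argument cleanly.
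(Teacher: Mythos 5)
Your proposal is correct and follows essentially the same route as the paper's proof: the paper likewise interprets $\sum_{t=0}^{v-1} x_t y_{t+\delta}$ as counting positions $t$ with $t \in A$ and $t+\delta \in B$, identifying this count with the multiplicity of $\delta$ in $\Delta(B,A)$, with part~(ii) following immediately. Your write-up merely makes the underlying bijection $t \mapsto (t, t+\delta)$ explicit, which the paper leaves implicit.
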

\begin{proof}
For fixed $\delta \in \{0\,\ldots,v-1\}$ the sum $\sum_{t=0}^{v-1} x_t y_{t+\delta}$ counts the number of positions $t$ such that $x_t=1=y_{t+\delta}$.  This is the number of $t \in \mathbb{Z}_v$ such that $t \in A$ and $t+\delta \in B$, i.e. the number of times $\delta$ occurs in $\Delta(B,A)$.
\end{proof}

We end the section with some further sequence terminology. Let $X$ be a sequence. Following \cite{Men}, we define a \emph{run} of $X$ to be a substring of $X$ consisting of consecutive $0$s or consecutive $1$s which is neither preceded nor succeeded by the same symbol. We call a run of $0$s a \emph{gap} and a run of $1$s a \emph{block}.  For example in the length-$9$ sequence $111100010$, $1111$ is a substring which is a block of length $4$, and $000$ is a substring which is a gap of length $3$. 

\section{Constructions for PSEDFs}
\label{sec:PSEDFs}

In this section, we present results for PSEDFs and non-disjoint SEDFs which demonstrate their differences and similarities with classical SEDFs.  We use binary sequences.  For a sequence $X=(x_i)_{i=0}^{v-1}$, indices are taken modulo $v$.

We first construct infinite two-set families of non-disjoint SEDFs for any $\lambda$-value. For classical SEDFs, all known families with fixed $\lambda$ have $\lambda=1$.  

\begin{thm} \label{theorem:Ex4.8}
If $v|k^2$ and $k|v$ then
\begin{itemize}
\item[(i)] there exists an infinite family of $(v,2, k,\frac{k^2}{v})$-PSEDFs in $\mathbb{Z}_v$.
\item[(ii)] there exists an infinite family of non-disjoint $(v,2, k,\frac{k^2}{v})$-SEDFs in $\mathbb{Z}_v$.\\
Specifically, the sets of the PSEDF (and SEDF) are $A_X = \{0, 1, 2, \ldots, k-1\}$ and $A_Y= \left\{a k, ak+1, \ldots, ak+ \lambda-1 \; : \; a = 0, 1, \ldots, \left(\frac{v}{k}-1\right) \right\}$.

\end{itemize}
\end{thm}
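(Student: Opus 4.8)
The plan is to establish part (i) directly from the sequence correspondence of Proposition~\ref{prop:basic}, and then obtain part (ii) essentially for free. First I would unpack the two divisibility hypotheses: writing $v = kc$ (possible since $k \mid v$), the condition $v \mid k^2$ becomes $c \mid k$, so $\lambda = k^2/v = k/c$ is a positive integer with $\lambda \le k$. This makes $A_Y$ a genuine $k$-set: it consists of $c = v/k$ blocks of $\lambda$ consecutive elements with starting points $0, k, 2k, \ldots, (c-1)k$, and since consecutive starting points differ by $k \ge \lambda$ these blocks are pairwise disjoint, giving $|A_Y| = c\lambda = k$. For part (ii) I would invoke the earlier theorem that a $(v,m,k,\lambda)$-PSEDF is a non-disjoint $(v,m,k,(m-1)\lambda)$-SEDF: with $m=2$ we have $(m-1)\lambda = \lambda$, so any PSEDF produced in (i) is immediately a non-disjoint SEDF with identical parameters. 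The whole theorem thus reduces to verifying that $\{A_X, A_Y\}$ is a PSEDF.

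Next I would pass to binary sequences. The sequence $X_{A_X}$ is a single block of $k$ ones followed by $v-k$ zeros, so its indicator $x_t$ equals $1$ exactly for $t \in \{0,\ldots,k-1\}$. The sequence $X_{A_Y}$ is periodic of period $k$: within each length-$k$ window it has $\lambda$ ones followed by $k-\lambda$ zeros. Concretely $y_i = 1$ iff $i \bmod k \in \{0,\ldots,\lambda-1\}$, and since $k \mid v$ this gives $y_{i+k} = y_i$ for all $i \in \mathbb{Z}_v$, so $X_{A_Y}$ genuinely has period $k$ with exactly $\lambda$ ones per period.

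The core step is the cross-correlation computation. By Proposition~\ref{prop:basic}(ii) it suffices to show $\sum_{t=0}^{v-1} x_t\, y_{t+\delta} = \lambda$ for every $\delta$, which yields $\Delta(A_Y,A_X) = \lambda\mathbb{Z}_v$; the reverse difference $\Delta(A_X,A_Y) = \lambda\mathbb{Z}_v$ then follows because negation fixes the multiset $\lambda\mathbb{Z}_v$ in an abelian group. Since $x_t$ is supported on $\{0,\ldots,k-1\}$, the sum collapses to $\sum_{t=0}^{k-1} y_{t+\delta}$, the number of ones in the length-$k$ window of $X_{A_Y}$ beginning at position $\delta$. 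Here the periodicity does the work: the $k$ consecutive indices $\delta, \delta+1, \ldots, \delta+k-1$ run through every residue modulo $k$ exactly once, and $y$ depends only on this residue, so the window contains exactly as many ones as one full period, namely $\lambda$, independently of $\delta$. This constant value $\lambda$ for all shifts completes the PSEDF verification.

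The main obstacle — really the only point needing care — is the clean justification that every length-$k$ window of a period-$k$ sequence carries the same number of ones; I would argue this via the residue-class observation above rather than a case analysis on $\delta$. Two smaller checks round things off: that the construction respects non-disjointness (indeed $A_X \cap A_Y = \{0,\ldots,\lambda-1\}$ has size exactly $\lambda$, matching the multiplicity of $0$ that the PSEDF condition forces into each $\Delta(A_i,A_j)$), and that the hypotheses admit infinitely many parameter pairs — for instance $v = k^2$ (so $c=k$, $\lambda=1$) for every $k$, or $v=k$ (so $\lambda=k$) — so the construction genuinely yields an infinite family in each part.
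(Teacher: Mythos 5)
Your proposal is correct, and its overall architecture matches the paper's proof: both pass to binary sequences via Proposition~\ref{prop:basic}, use the support of $X_{A_X}$ to collapse the cross-correlation sum to the weight of the length-$k$ window of $X_{A_Y}$ starting at position $\delta$, and conclude that this weight is constantly $\lambda$. Where you differ is in the justification of that constancy: the paper argues by a case analysis on whether the window begins with $s \le \lambda$ ones or with $s \le k-\lambda$ zeros, tracking the specific block-gap structure of $Y$, whereas you observe that $y$ depends only on the index residue modulo $k$ (valid since $k \mid v$) and that any $k$ consecutive indices hit each residue class exactly once. Your argument is cleaner and strictly more general --- it uses only the period-$k$ structure of $Y$ and the weight of one period, not the contiguity of the ones, so it actually establishes the paper's subsequent generalisation (Theorem~\ref{thm:generalized}) with no extra work. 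You also supply several details the paper leaves implicit: that $|A_Y| = (v/k)\lambda = k$ so the sets genuinely have size $k$; that $\Delta(A_X,A_Y) = \lambda\mathbb{Z}_v$ follows from $\Delta(A_Y,A_X) = \lambda\mathbb{Z}_v$ by negation (the paper checks only one direction before asserting the PSEDF property); and that part (ii) follows from part (i) via the theorem that a $(v,m,k,\lambda)$-PSEDF is a non-disjoint $(v,m,k,(m-1)\lambda)$-SEDF, with $m=2$. One minor caution: your remark that $v=k$ gives an admissible parameter pair is degenerate, since then $A_X = A_Y = \mathbb{Z}_v$ and the ``family'' of two sets collapses; the family $v = k^2$, $\lambda = 1$, which you also cite, is the right witness for infinitude.
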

\begin{proof}
Let $\lambda=\frac{k^2}{v}$. As $k \leq v$, we have $\lambda \leq k$.  The sequences corresponding to the sets $A_X$, $A_Y$ are:
$$
    X = \overbrace{11\ldots1}^k \overbrace{00\ldots0}^k \ldots \overbrace{00\cdots0}^k, \quad
    Y = \overbrace{\underbrace{1\ldots1}_{\lambda}0\ldots0}^k \overbrace{\underbrace{1\ldots1}_{\lambda}0\ldots0}^k \ldots \overbrace{\underbrace{1\ldots1}_{\lambda}0\ldots0}^k.$$
Here $X$ is a block of length $k$ then a gap of length $v-k$, while $Y$ comprises a block of length $\lambda$ then a gap of length $k-\lambda$, repeated $\frac{v}{k}$ times.  

Write $X=(x_t)$, $Y=(y_t)$.  Let $\delta \in \{0, \ldots, v-1\}$.  Consider $\Sigma_{t=0}^{v-1} x_t y_{t+\delta}$.  We see that $\Sigma_{t=0}^{v-1} x_t y_{t+\delta} = \Sigma_{t=0}^{k-1} x_t y_{t+\delta}$, since $x_t=0$ for $t=k, \ldots, v-1$, so we need only consider the length-$k$ substring $Y_{\delta}=y_{\delta} y_{1+\delta} \ldots y_{k-1+\delta}$ of $Y$.  The value of $\Sigma_{t=0}^{k-1} x_t y_{t+\delta}$ is exactly the number of 1s in $Y_{\delta}$.  By construction, if any length-$k$ substring $W$ of $Y$ starts with some $s \le \lambda$ 1s, it is followed by a gap of length $k-\lambda$, which is then followed by a block of length $\lambda-s$. 
If it starts with some $s \le k-\lambda$ 0s, it is followed by a block of length $\lambda$, which is then followed by a gap of length $k-\lambda-s$.   In either case there are always $\lambda$ 1s in $W$.  Hence $\Sigma_{t=0}^{v-1} x_t y_{t+\delta} = \Sigma_{t=0}^{k-1} x_t y_{t+\delta} = \lambda$.  This applies to any $\delta$, and hence by Proposition \ref{prop:basic}, $\{A_X,A_Y\}$ is a non-disjoint $(v,2,k, \frac{k^2}{v})$-PSEDF.
\end{proof}

\begin{cor}\label{cor:lambda}
\begin{itemize}
\item[(i)]  For any $a,r \in \mathbb{N}$, there exists a $(r a^2,2,ra,r)$-PSEDF in $\mathbb{Z}_{ra^2}$.
\item[(i)] Let $\lambda \in \mathbb{N}$.  Then in $\mathbb{Z}_{\lambda a^2}$, there exists a non-disjoint  $(\lambda a^2,2,\lambda a,\lambda)$-SEDF for all $a \in \mathbb{N}$. 
\item[(iii)] When $\lambda=1$, the sets $\{0,1,\ldots,k-1\}, \{0, k,2k,\ldots,(k-1)k\}$ form a non-disjoint $(k^2,2,k,1)$-SEDF in $\mathbb{Z}_{k^2}$.
\end{itemize}
\end{cor}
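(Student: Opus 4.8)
The plan is to derive each part of Corollary~\ref{cor:lambda} as a direct specialisation of Theorem~\ref{theorem:Ex4.8}, since the corollary is essentially a repackaging of that result for convenient parameter families. The key observation is that Theorem~\ref{theorem:Ex4.8} requires the divisibility conditions $v \mid k^2$ and $k \mid v$, and produces structures with $\lambda = k^2/v$; so for each part I would first verify that the stated parameters satisfy both divisibility conditions, and then check that $k^2/v$ equals the claimed $\lambda$.

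For part (i), I would set $v = ra^2$ and $k = ra$. Then $k \mid v$ holds because $v/k = a$, and $v \mid k^2$ holds because $k^2/v = r^2a^2/(ra^2) = r$, an integer. Theorem~\ref{theorem:Ex4.8}(i) then immediately yields a $(ra^2, 2, ra, r)$-PSEDF, with $\lambda = k^2/v = r$ as required. Part (ii) is the analogous specialisation with $r$ relabelled as $\lambda$: setting $v = \lambda a^2$, $k = \lambda a$ gives $v/k = a$ and $k^2/v = \lambda$, so Theorem~\ref{theorem:Ex4.8}(ii) delivers the non-disjoint $(\lambda a^2, 2, \lambda a, \lambda)$-SEDF directly.

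For part (iii), I would take $\lambda = 1$, which by Lemma~\ref{lem:relations}(ii) (or by the relation $\lambda = k^2/v$) forces $v = k^2$. Plugging $v = k^2$ and the given $k$ into the explicit set descriptions from Theorem~\ref{theorem:Ex4.8}, the set $A_X = \{0,1,\ldots,k-1\}$ is unchanged, while $A_Y = \{ak, ak+1, \ldots, ak+\lambda-1 : a = 0,\ldots,(v/k)-1\}$ simplifies: with $\lambda = 1$ the inner range collapses to the single element $ak$, and $v/k = k$, so $A_Y = \{0, k, 2k, \ldots, (k-1)k\}$. This matches the stated sets exactly, and Theorem~\ref{theorem:Ex4.8}(ii) guarantees the non-disjoint SEDF property.

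There is essentially no obstacle here: the entire corollary reduces to three short arithmetic verifications that the named parameter families lie inside the hypotheses of Theorem~\ref{theorem:Ex4.8}, together with one elementary simplification of the explicit set formula in the $\lambda = 1$ case. The only point requiring any care is bookkeeping between the two parametrisations (the roles of $r$, $\lambda$, and $a$), and confirming in each instance that $\lambda = k^2/v$ genuinely comes out as an integer so that the divisibility hypotheses are met.
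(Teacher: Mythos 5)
Your proposal is correct and matches the paper's (implicit) approach: the corollary is stated without proof precisely because it is the direct specialisation of Theorem~\ref{theorem:Ex4.8} that you carry out, checking $k \mid v$, $v \mid k^2$ and $\lambda = k^2/v$ for each parametrisation and simplifying the explicit sets when $\lambda = 1$. Your arithmetic verifications are all accurate, so nothing further is needed.
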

Note the similarity between the non-disjoint SEDFs of Corollary \ref{cor:lambda}(iii) and the SEDFs of Example \ref{ex:PatSti}; this will be explored subsequently.

\begin{ex}
\begin{itemize}
\item[(i)] In $\mathbb{Z}_9$, the sets $\{0,1,2\}$ and $\{0,3,6\}$ form a $(9,2,3,1)$-PSEDF corresponding to sequences $\{111000000, 100100100\}$.
\item[(ii)] In $\mathbb{Z}_8$, the sets  $\{0,1,2,3\}$ and $\{0,1,4,5\}$ form an $(8,2,4,2)$-PSEDF corresponding to sequences $\{11110000, 11001100\}$.
\end{itemize}
\end{ex}


We have the following generalisation of Theorem \ref{theorem:Ex4.8}.
\begin{thm}\label{thm:generalized}
Suppose $v|k^2$ and $k|v$.  Let $X=(x_t)_{i=0}^{v-1}$ be defined by $x_i=1$ for $0 \leq i \leq k-1$ and $x_i=0$ for $k \leq i \leq v-1$. If $Y=(y_t)_{t=0}^{v-1}$ is any sequence such that $(y_{t+k})=(y_t)$ and $y_0\ldots y_{k-1}$ has weight $\lambda=\frac{k^2}{v}$, then $\{X,Y\}$ corresponds to a $(v,2, k,\frac{k^2}{v})$-PSEDF in $\mathbb{Z}_v$ and a non-disjoint $(v,2, k,\frac{k^2}{v})$-SEDF in $\mathbb{Z}_v$.
\end{thm}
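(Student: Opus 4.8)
The plan is to imitate the proof of Theorem \ref{theorem:Ex4.8}, but to isolate the single feature of $Y$ that is actually needed, namely that every length-$k$ window of $Y$ has weight $\lambda$. First I would record the consistency facts. Since $v \mid k^2$ the value $\lambda = k^2/v$ is a positive integer, and writing $v = mk$ (using $k \mid v$) shows the total weight of $Y$ is $(v/k)\lambda = k$, so $A_Y$ is a genuine $k$-set. The hypothesis $y_{t+k} = y_t$ (indices mod $v$) is consistent precisely because $k \mid v$, and it says exactly that $y_t$ depends only on $t \bmod k$.

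By Proposition \ref{prop:basic} it suffices to show $\sum_{t=0}^{v-1} x_t y_{t+\delta} = \lambda$ for every $\delta \in \{0,\ldots,v-1\}$. As in Theorem \ref{theorem:Ex4.8}, since $x_t = 0$ for $k \le t \le v-1$, this sum collapses to $\sum_{t=0}^{k-1} y_{t+\delta}$, which is the weight of the length-$k$ substring $Y_\delta = y_\delta y_{\delta+1}\ldots y_{\delta+k-1}$ of $Y$.

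The key step, which is where the generality over Theorem \ref{theorem:Ex4.8} is realised, is to observe that this weight does not depend on $\delta$. Because $y_t$ depends only on $t \bmod k$, and the $k$ consecutive indices $\delta, \delta+1, \ldots, \delta+k-1$ run through a complete set of residues modulo $k$, the multiset $\{y_\delta, \ldots, y_{\delta+k-1}\}$ coincides with $\{y_0, \ldots, y_{k-1}\}$. Hence $\sum_{t=0}^{k-1} y_{t+\delta} = \sum_{r=0}^{k-1} y_r = \lambda$, the prescribed weight of one period, for every $\delta$.

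Thus $\sum_{t=0}^{v-1} x_t y_{t+\delta} = \lambda$ for all $\delta$, so Proposition \ref{prop:basic}(ii) gives $\Delta(A_Y, A_X) = \lambda\mathbb{Z}_v$. Since negation is a bijection of $\mathbb{Z}_v$ and $\Delta(A_X, A_Y) = -\Delta(A_Y, A_X)$ as multisets, we also get $\Delta(A_X, A_Y) = \lambda\mathbb{Z}_v$; these two equalities are exactly the conditions for $\{A_X, A_Y\}$ to be a $(v,2,k,\lambda)$-PSEDF, and the earlier theorem that a two-set PSEDF is a non-disjoint $(v,2,k,\lambda)$-SEDF then completes the claim. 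I expect the only point needing care to be the verification that a length-$k$ window meets each residue class modulo $k$ exactly once (together with the consistency of the period-$k$ condition modulo $v$); there is no genuine computational obstacle here, as the entire argument reduces to the periodicity of $Y$.
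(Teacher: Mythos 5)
Your proposal is correct. Note that the paper itself states Theorem~\ref{thm:generalized} without proof, offering it only as a ``generalisation'' of Theorem~\ref{theorem:Ex4.8}, so the only point of comparison is the proof of that earlier theorem. Your argument follows the same skeleton as that proof: use Proposition~\ref{prop:basic}, collapse $\sum_{t=0}^{v-1} x_t y_{t+\delta}$ to the weight of the length-$k$ window $y_\delta \ldots y_{\delta+k-1}$ (since $x_t$ vanishes for $t \geq k$), and show every such window has weight $\lambda$. Where the paper's proof establishes the constant window weight by a run/gap case analysis specific to its particular $Y$ (a block of $\lambda$ ones followed by a gap), you replace this with the observation that $k|v$ makes the periodicity condition equivalent to $y_t$ depending only on $t \bmod k$, so that any $k$ consecutive indices hit each residue class exactly once and every window is a rearrangement of one period. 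This is exactly the right move for the general statement --- the paper's case analysis does not apply to an arbitrary periodic $Y$ --- and it also makes clear that periodicity plus the weight of one period is the only property of $Y$ being used. Your explicit checks that $|A_Y|=k$ and that $\Delta(A_X,A_Y)$ follows from $\Delta(A_Y,A_X)$ by negation are points the paper glosses over even in the special case.
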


We next show non-disjoint SEDFs exist with any number of sets.
\begin{thm}\label{thm:manysets}
Let $N>1$.  There exists a $(2^N, N, 2^{N-1}, 2^{N-2})$-PSEDF in $\mathbb{Z}_{2^N}$.
\end{thm}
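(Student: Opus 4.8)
The plan is to construct the $N$ sets explicitly from the binary digits of $\mathbb{Z}_{2^N}$ and to verify the pairwise condition through the sequence cross-correlation of Proposition \ref{prop:basic}. For $0 \le i \le N-1$, let $b_i(x)$ denote the $i$th binary digit of $x \in \{0,\dots,2^N-1\}$ and set $A_i = \{x : b_i(x)=0\}$. Each $A_i$ has size $2^{N-1}$, and its associated sequence $X_i=(x_i)_{t}$ is the square wave $(1^{2^i}0^{2^i})$ repeated $2^{N-1-i}$ times, since $b_i(x)$ depends only on $x \bmod 2^{i+1}$. By Proposition \ref{prop:basic}(ii), it suffices to show that for every ordered pair $i \ne j$ and every $\delta$ the cross-correlation $\sum_{t=0}^{2^N-1} (X_i)_t (X_j)_{t+\delta}$ equals $\lambda = 2^{N-2}$; note this is consistent with $\lambda v = k^2$ from Lemma \ref{lem:relations}(ii), as $2^{N-2}\cdot 2^N = (2^{N-1})^2$.

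I would pass to the $\pm1$-valued functions $R_i(t) = (-1)^{b_i(t)} = 2(X_i)_t - 1$, so that $(X_i)_t = \tfrac12(1+R_i(t))$. Expanding the product gives
$$\sum_{t=0}^{2^N-1}(X_i)_t(X_j)_{t+\delta} = \tfrac14\Big(2^N + \sum_t R_i(t) + \sum_t R_j(t+\delta) + \sum_t R_i(t)R_j(t+\delta)\Big).$$
Each $R_i$ is balanced over a full period of length $2^{i+1}\mid 2^N$, so the two single sums vanish, and the claim reduces to proving $S(\delta) := \sum_{t=0}^{2^N-1} R_i(t)R_j(t+\delta) = 0$ for all $\delta$ whenever $i \ne j$.

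The crux — and the point where the failure of the digits to be group homomorphisms, i.e.\ the carries in $t+\delta$, must be handled — is this final vanishing. Assume without loss of generality $i<j$. Since $R_i$ and $R_j$ both have period dividing $2^{j+1}$, the summand is $2^{j+1}$-periodic, so $S(\delta) = 2^{N-j-1}\sum_{t=0}^{2^{j+1}-1} R_i(t)R_j(t+\delta)$. Over one such period, $R_j(t+\delta)$ equals $+1$ on a cyclic interval $I^{+}$ of $2^j$ consecutive residues and $-1$ on its complement $I^{-}$; using $\sum_{t=0}^{2^{j+1}-1}R_i(t)=0$ one gets $\sum_{I^{-}}R_i = -\sum_{I^{+}}R_i$, so the period-sum equals $2\sum_{t \in I^{+}}R_i(t)$. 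Finally, $|I^{+}| = 2^j$ is a multiple of the period $2^{i+1}$ of $R_i$ (since $i+1 \le j$), and a sum of a $p$-periodic function over a block of consecutive residues whose length is a multiple of $p$ is zero regardless of the starting point; hence $\sum_{I^{+}} R_i = 0$ and $S(\delta)=0$.

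This yields cross-correlation $2^{N-2}$ for every ordered pair $(i,j)$ with $i \ne j$ and every shift $\delta$, so by Proposition \ref{prop:basic}(ii) each $\Delta(A_i,A_j)=2^{N-2}\,\mathbb{Z}_{2^N}$, and $\{A_0,\dots,A_{N-1}\}$ is the required $(2^N,N,2^{N-1},2^{N-2})$-PSEDF. I expect the main obstacle to be precisely the treatment of the carries in $R_j(t+\delta)$; the reduction to a cyclic-interval sum of $R_i$ over a single period of $R_j$ side-steps this cleanly, since it converts the carry-laden shift into a mere rotation of the interval, over which the periodicity of $R_i$ forces cancellation.
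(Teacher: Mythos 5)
Your construction coincides exactly with the paper's: the sets $A_i=\{x: b_i(x)=0\}$ are precisely those corresponding to the paper's square-wave sequences (a block of $2^i$ ones followed by a gap of $2^i$ zeros, repeated periodically), and your verification rests on the same two facts as the paper's proof, namely reduction of the correlation sum to a single period of the longer-period sequence and the balancedness of any window of the shorter-period sequence whose length is a multiple of its period. The only difference is cosmetic --- you phrase the count via $\pm1$-valued correlations and a cyclic-interval cancellation, where the paper restricts the $0/1$ sum to the half-period on which the longer-period sequence equals $1$ and directly counts the weight of the resulting window --- so the proposal is correct and essentially the same as the paper's proof.
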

\begin{proof} 
For $1 \leq i \leq N$, define the binary sequence $X_i=(x_t)_{t=0}^{v-1}$ as follows:
\begin{align*}
    & x_0 = \cdots =x_{2^{i-1}-1} = 1, \\
    & x_{2^{i-1}} = \cdots = x_{2^{i}-1} = 0, \\
    & x_t = x_{t+2^i}, t \ge 2^i.
\end{align*}
So for each $X_i$ we have:
$$X_i = \underbrace{1\ldots1}_{2^{i-1}}\underbrace{0\ldots0}_{2^{i-1}} \underbrace{11\ldots100\ldots0}_{2^i} \ldots \underbrace{11\ldots100\ldots0}_{2^i}. $$

$X_i$ consists of a block of length $2^{i-1}$, followed by a gap of length $2^{i-1}$, and this length-$2^i$ substring is repeated $2^{N-i}$ times.  By construction, since $x_t=x_{t+2^i}$ for $t \geq 2^i$, every substring of length $2^i$ has an equal number of 1s and 0s, i.e. has weight $2^{i-1}$.  Therefore any substring of length $r$ where $2^i|r$ has weight $\frac{r}{2}$.  In particular, a substring of length $2^j$, $j \geq i$, has weight $2^{j-1}$.

We claim that these sequences $X_i$ ($1 \leq i \leq N$) correspond to sets $A_i$ ($1 \leq i \leq N$) which form a PSEDF in $\mathbb{Z}_{2^N}$ with the given parameters.  We determine $\Delta(A_i,A_j)$ for $1 \leq i \neq j \leq N$; by symmetry we may assume $i<j$.

Let $X_i =(z_t)$, $X_j=(y_t)$, with $i < j$.  Let $\delta \in \{0,1,\ldots, v-1\}$. We determine $S=\Sigma_{t=0}^{v-1} y_t z_{t+\delta}$.  Observe that $S = 2^{N-j} \times \Sigma_{t=0}^{2^{j}-1} y_t z_{t+\delta}$ since $z_t=z_{t+2^j}$ and $y_t=y_{t+2^j}$ ($t \geq 2^j$).  Moreover, since $y_{2^{j-1}}= \cdots =y_{2^j-1}=0$, $S=2^{N-j} \times \Sigma_{t=0}^{2^{j-1}-1} y_t z_{t+\delta}$. Since, from above, any substring of $X_i$ of length $2^{j-1}$ has weight equal to half its length, we have
$S= 2^{N-j} \times \frac{2^{j-1}}{2}= 2^{N-2}$.  Since $X_i$ and $X_j$ ($i<j$) were arbitrary (and using symmetry), we have that $\Delta(A_j,A_i)=2^{N-2}$ for all $i \neq j$.  Hence by Proposition \ref{prop:basic}, the sequences correspond to a $(2^N, N, 2^{N-1}, 2^{N-2})$-PSEDF in $\mathbb{Z}_{2^N}$ as required.
\end{proof}

The above theorem demonstrates a significant difference between the classical SEDF and its non-disjoint analogue:

\begin{cor}
For any $N>1$, there exists a non-disjoint $(2^N,N,2^{N-1},(N-1)2^{N-2})$-SEDF, i.e. a non-disjoint SEDF with $N$ sets.
\end{cor}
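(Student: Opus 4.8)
The plan is to obtain this corollary as an immediate consequence of Theorem \ref{thm:manysets} together with the general relationship between PSEDFs and non-disjoint SEDFs recorded earlier in the paper. First I would invoke Theorem \ref{thm:manysets}, which guarantees the existence of a $(2^N, N, 2^{N-1}, 2^{N-2})$-PSEDF in $\mathbb{Z}_{2^N}$ for every $N>1$. This PSEDF already supplies a family of $N$ sets and fixes all four parameters except the frequency value, which is the only thing I need to adjust.

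Next I would apply part (i) of the earlier theorem asserting that every $(v,m,k,\lambda)$-PSEDF is automatically a non-disjoint $(v,m,k,(m-1)\lambda)$-SEDF. Substituting the parameters of the PSEDF above, namely $v=2^N$, $m=N$, $k=2^{N-1}$ and $\lambda=2^{N-2}$, the resulting frequency is $(m-1)\lambda=(N-1)\cdot 2^{N-2}$. This yields precisely a non-disjoint $(2^N, N, 2^{N-1}, (N-1)2^{N-2})$-SEDF, as claimed, and since $N>1$ was arbitrary the conclusion holds for every such $N$.

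There is no genuine obstacle here: all the combinatorial effort has already been expended in the proof of Theorem \ref{thm:manysets}, where the explicit sequences $X_i$ are shown to realise the required cross-correlation via Proposition \ref{prop:basic}, and in the proof that a PSEDF is a non-disjoint SEDF (which follows by accumulating the pairwise external-difference multisets over the $m-1$ partners of each fixed set, each contributing $\lambda$ copies of every group element). The only point requiring any care is the bookkeeping of the frequency parameter, that is, checking that $(m-1)\lambda$ evaluates to $(N-1)2^{N-2}$ for the given values; this is a one-line verification. I would additionally emphasise that $N$ is unbounded, since this is the entire force of the corollary: it exhibits non-disjoint SEDFs with arbitrarily many sets, in sharp contrast to the classical SEDF where only the $m=11$ example is known beyond two sets.
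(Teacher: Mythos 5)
Your proof is correct and follows exactly the route intended by the paper: the corollary is stated immediately after Theorem \ref{thm:manysets} precisely because it follows from that theorem combined with the earlier result that a $(v,m,k,\lambda)$-PSEDF is a non-disjoint $(v,m,k,(m-1)\lambda)$-SEDF, with the frequency $(N-1)2^{N-2}$ obtained by the same substitution you perform.
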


\begin{ex}
\begin{itemize}
\item[(i)] In $\mathbb{Z}_8$, the sets 
$\{0,2,4,6\}$, $\{0,1,4,5\}$ and $\{0,1,2,3\}$
form a $(8,3,4,2)$-PSEDF and disjoint $(8,3,4,4)$-SEDF, corresponding to sequences $10101010$, $11001100$ and $11110000$.
\item[(ii)]  In $\mathbb{Z}_{16}$, the sets 
$\{0,2,4,6,8,10,12,14\}$, $\{0,1,4,5,8,9,12,13\}$, \\ $\{0,1,2,3,8,9,10,11\}$ and $ \{0,1,2,3,4,5,6,7\}$
form a $(16,4,8,4)$-PSEDF and disjoint $(16,4,8,12)$-SEDF.
\end{itemize}
\end{ex}

\section{Relationship to classical SEDFs}
\label{sec:classical}
We next explain the similarity between the family of non-disjoint SEDFs in Corollary \ref{cor:lambda}(iii) and the family of SEDFs in Example \ref{ex:PatSti}. 

\begin{prop}\label{prop:extend}
Let $v|k^2$ and $k|v$. As subsets of $\mathbb{Z}_{v+1}$, the sets of the non-disjoint $(v,2, k,\frac{k^2}{v})$-SEDF in $\mathbb{Z}_v$ of Theorem \ref{theorem:Ex4.8} given by
\begin{itemize}
\item[] $A_{X'} = \{0, 1, 2, \ldots, k-1\}$; and 
\item[] $A_{Y'} = \left\{ak, ak+1, \ldots, ak+ \lambda-1 \; : \; a = 0, 1, \ldots, \left(\frac{v}{k}-1\right) \right\}$
\end{itemize}
satisfy $$\Delta(A_{X'}, A_{Y'})= \lambda(G \setminus \{v-k+1, \ldots, v-k+\lambda\})+ (\lambda-1) \{v-k+1,\ldots, v-k+\lambda\}.$$
\end{prop}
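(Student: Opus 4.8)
The plan is to compute the multiset of \emph{integer} differences $x-y$ (with $x\in A_{X'}$, $y\in A_{Y'}$) once, and then read off its reductions both modulo $v$ and modulo $v+1$, exploiting that Theorem~\ref{theorem:Ex4.8} already pins down the former. Since every element of $A_{X'}$ and $A_{Y'}$ lies in $\{0,1,\ldots,v-1\}$ and $\lambda\le k$, each integer difference $d=x-y$ satisfies $-(v-k+\lambda-1)\le d\le k-1$; in particular $-v<d<v$, so there is no double wrap-around. I would split these differences into the nonnegative ones (lying in $[0,k-1]$) and the negative ones (lying in $[-(v-k+\lambda-1),-1]$). A nonnegative $d$ reduces to $d$ both modulo $v$ and modulo $v+1$, whereas a negative $d$ reduces to $d+v$ modulo $v$ but to $d+v+1$ modulo $v+1$. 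Thus, passing from $\mathbb{Z}_v$ to $\mathbb{Z}_{v+1}$ leaves the nonnegative contributions fixed and shifts every negative contribution up by exactly one place.

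Next I would turn this into a count. Writing $n(\sigma)$ for the number of negative integer differences whose reduction modulo $v$ equals $\sigma$, Theorem~\ref{theorem:Ex4.8} (via Proposition~\ref{prop:basic}) tells us that in $\mathbb{Z}_v$ every residue occurs exactly $\lambda$ times, so the nonnegative contribution at residue $s$ is $\lambda-n(s)$. Combining this with the one-place shift of the negative part, the number of occurrences of $s$ in $\Delta(A_{X'},A_{Y'})$, viewed in $\mathbb{Z}_{v+1}$, is $(\lambda-n(s))+n(s-1)=\lambda+\big(n(s-1)-n(s)\big)$ for the relevant range of $s$, with the single new element $s=v$ treated directly. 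Because the total difference count $k^2$ is preserved, the correction $n(s-1)-n(s)$ telescopes to zero over the group, so a deficit can appear only where $n$ is strictly increasing.

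It therefore remains to determine the profile of $n$. Here I would use the explicit block structure of $A_{Y'}$ (a block of $\lambda$ ones followed by a gap of $k-\lambda$ zeros, repeated) to show that, as a function of the residue, $n$ is zero up to a certain point, then increases by exactly $1$ at each of $\lambda$ consecutive residues until it attains its maximum value $\lambda$, after which it stays constant. Consequently $n(s-1)-n(s)=-1$ on exactly those $\lambda$ consecutive residues and is $0$ elsewhere, yielding a multiset of the claimed shape, namely $\lambda(G\setminus\{v-k+1,\ldots,v-k+\lambda\})+(\lambda-1)\{v-k+1,\ldots,v-k+\lambda\}$. The main obstacle I anticipate is precisely the wrap-around bookkeeping in this final step: correctly locating the ramp of $n$ — equivalently, getting the direction of the one-place shift right so that the deficit lands on $\{v-k+1,\ldots,v-k+\lambda\}$ rather than on its negative — together with checking via the identity $\lambda(v+1-\lambda)+(\lambda-1)\lambda=\lambda v=k^2$ that no element acquires more than $\lambda$ copies.
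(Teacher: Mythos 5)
Your reduction framework is sound and genuinely different from the paper's: the paper compares the length-$k$ substrings of $Y'$ against those of the original $Y$ and tracks which ones lose a $1$ when the extra $0$ is inserted, whereas you lift everything to integer differences and exploit that reduction modulo $v$ and modulo $v+1$ agree on nonnegative differences and differ by a one-place shift on negative ones --- a clean way to recycle Theorem \ref{theorem:Ex4.8} wholesale. Your counting identity $(\lambda-n(s))+n(s-1)$, the telescoping observation, and the claimed shape of $n$ (flat at $0$, then a ramp of $\lambda$ unit steps, then flat at $\lambda$) are all correct.

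However, the one step you explicitly left open --- locating the ramp --- is exactly where the content lies, and it does not come out the way you assert. With your convention ($d=x-y$, $x\in A_{X'}$, $y\in A_{Y'}$), the negative differences of largest magnitude come from small $x$ against the last block of $A_{Y'}$, so $n(\sigma)=0$ for $\sigma\le k-\lambda$ and the ramp sits on $\{k-\lambda+1,\ldots,k\}$: the deficit lands on $\{k-\lambda+1,\ldots,k\}$, which is the \emph{negation} in $\mathbb{Z}_{v+1}$ of the claimed set, not on $\{v-k+1,\ldots,v-k+\lambda\}$. Concretely, take $v=8$, $k=4$, $\lambda=2$, so $A_{X'}=\{0,1,2,3\}$, $A_{Y'}=\{0,1,4,5\}$ in $\mathbb{Z}_9$: then $\Delta(A_{X'},A_{Y'})$ contains $3$ and $4$ once each and every other element twice, whereas the statement's set is $\{5,6\}$; it is $\Delta(A_{Y'},A_{X'})$ (differences $y-x$) whose deficiency set is $\{5,6\}$. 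So your argument can only be completed by running it on the differences $y-x$, and what it then proves is the proposition with $\Delta(A_{Y'},A_{X'})$ in place of $\Delta(A_{X'},A_{Y'})$ (equivalently, with the exceptional set replaced by $\{k-\lambda+1,\ldots,k\}$). In fairness, this mirrors an imprecision in the paper itself: by the convention of Proposition \ref{prop:basic}, the paper's computation is of $\Delta(A_{Y'},A_{X'})$, and its closing ``by symmetry'' transfer to $\Delta(A_{X'},A_{Y'})$ is not valid, since negation moves the deficiency set. Your approach, carried out carefully, would expose exactly this sign discrepancy rather than prove the statement as written.
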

\begin{proof}
Take the two length-$v$ sequences $X,Y$ which correspond to the sets $A_X$ and $A_Y$ in Theorem \ref{theorem:Ex4.8}. By appending an additional $0$ at the end of each sequence, the new length-$(v+1)$ sequences $X',Y'$ correspond to the sets $A_{X'},A_{Y'}$ in $\mathbb{Z}_{k^2+1}$.   $X'$ is a block of length $k$ followed by a gap of length $v-k+1$, while $Y$ is a block of length $\lambda=k^2/v$ followed by a gap of length $k-\lambda$, which is repeated $v/k$ times, except that the final gap now has length $k-\lambda+1$.  Write $X'=(x'_t)$, $Y'=(y'_t)$.

Let $\delta \in \{0, \ldots, v\}$.  Consider $S=\Sigma_{t=0}^{v} x'_t y'_{t+\delta}$. As before, $S= \Sigma_{t=0}^{k-1} x'_t y'_{t+\delta}$, since $x'_t=0$ for $t=k, \ldots, v$, so we need only consider the length-$k$ substring $Y'_{\delta}=y'_{\delta} y'_{1+\delta} \ldots y'_{k-1+\delta}$ of $Y'$.  The value of $\Sigma_{t=0}^{k-1} x'_t y'_{t+\delta}$ is exactly the number of 1s in $Y'_{\delta}$, i.e. its weight.

We determine $Y'_0, \ldots, Y'_v$.  For $0 \leq \delta \leq v-1$, let $Y_{\delta}=y_{\delta} y_{1+\delta} \ldots y_{k-1+\delta}$, the substring of the original sequence $Y$.  We have $Y'_v = 0 y_0 \ldots y_{k-2}= Y_{v-1}$, and for $0 \leq \delta \leq v-k$, $Y'_{\delta}=Y_{\delta}$.

For the remaining $v-k+1 \leq \delta \leq v-1$, write $\delta=v-k+i$, $1 \leq i \leq k-1$.  In $Y$, the substring $Y_{v-k+i}=y_{v-k+i} y_{v-k+i+1} \ldots y_{i-2} y_{i-1}$.  The substring $Y'_{v-k+i}$ is obtained from $Y_{v-k+i}$ by inserting a $0$ between its $(k-1-i)$th and $(k-i)$th entries ($y_{v-1}$ and $y_0$), then deleting its final entry $y_{i-1}$.  Overall 
$$Y'_{v-k+i}=y_{v-k+i} y_{v-k+i+1} \ldots y_{v-1}0 y_0 \ldots y_{i-2}$$
with the entries after the zero being present only for $2 \leq i \leq k-1$.  Hence the overall change in symbols, going from $Y_{v-k+i}$ to $Y'_{v-k+i}$ ($1 \leq i \leq k-1$), is to replace $y_{i-1}$ by $0$.  Now, by construction $y_0= \cdots=y_{\lambda-1}=1$ and $y_{\lambda}= \cdots y_{k-1}=0$.  So, from the substrings $Y'_{v-k+i}$ with $1 \leq i \leq k-1$, only those with $i-1 \in \{0 \ldots, \lambda-1\}$ undergo a change in weight (a reduction by $1$ to weight $\lambda-1$); the rest have weight $\lambda$.

Hence by Proposition \ref{prop:basic}, $\Delta(A_{Y'},A_{X'})$ (and by symmetry $\Delta(A_{X'},A_{Y'})$) comprises $\lambda$ copies of $\mathbb{Z}_{v+1} \setminus \{v-k+1, \ldots, v-k+\lambda\}$ and $\lambda-1$ copies of $\{v-k+1, \ldots, v-k+\lambda\}$. 
\end{proof}

\begin{thm}
\begin{itemize}
\item[(i)] The non-disjoint $(k^2,2,k,1)$-SEDF in $\mathbb{Z}_{k^2}$ of Theorem \ref{theorem:Ex4.8} may be converted by set-translation to a classical $(k^2+1,2,k,1)$-SEDF in $\mathbb{Z}_{k^2+1}$.
\item[(ii)] Non-disjoint SEDFs of Theorem \ref{theorem:Ex4.8} with $\lambda>1$ cannot be so converted. 
\end{itemize}
\end{thm}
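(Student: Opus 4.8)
The plan is to read both parts directly off Proposition~\ref{prop:extend}, which already records the external differences of the embedded sets $A_{X'}, A_{Y'}$ regarded inside $\mathbb{Z}_{v+1}$. The single structural fact driving everything is that set-translation merely shifts the difference multiset by a constant: replacing $A_{Y'}$ by $t+A_{Y'}$ sends $\Delta(A_{X'},A_{Y'})$ to $\Delta(A_{X'},A_{Y'})-t$ as a multiset, and translating both sets by a common amount leaves $\Delta(A_{X'},A_{Y'})$ unchanged. Thus any admissible set-translation permutes the group labels but preserves the \emph{multiplicity profile} of the difference multiset. I would isolate this observation first, since both directions reduce to comparing profiles.

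For part (i) I would specialise Proposition~\ref{prop:extend} to $\lambda=1$ (so $v=k^2$). The correction term $(\lambda-1)\{v-k+1,\ldots,v-k+\lambda\}$ then vanishes, leaving $\Delta(A_{X'},A_{Y'})$ equal to each element of $\mathbb{Z}_{k^2+1}$ exactly once apart from a single missing element. Since $0\in\Delta(A,B)$ if and only if $A\cap B\neq\emptyset$, a difference multiset omits the identity precisely when the two sets are disjoint. Hence translating $A_{Y'}$ so that its unique missing element is moved onto $0$ simultaneously forces disjointness and produces the multiset $\mathbb{Z}_{k^2+1}\setminus\{0\}$, which is exactly the classical $(k^2+1,2,k,1)$-SEDF condition. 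To make the conversion concrete I would identify the translate explicitly: tracking the missing element (using the symmetry $\Delta(A,B)=-\Delta(B,A)$) shows it sits at $k$, so shifting $A_{Y'}=\{0,k,2k,\ldots,(k-1)k\}$ by $k$ gives $\{k,2k,\ldots,k^2\}$, recovering the Paterson--Stinson family of Example~\ref{ex:PatSti}.

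For part (ii) I would run the same embedding with $\lambda>1$. Now the deficient set $\{v-k+1,\ldots,v-k+\lambda\}$ consists of $\lambda\ge 2$ distinct elements (these are $\lambda$ consecutive residues and $\lambda\le k\le v$), so Proposition~\ref{prop:extend} yields a multiset with exactly $\lambda$ elements of multiplicity $\lambda-1$ and all others of multiplicity $\lambda$. By the translation-invariance above, every set-translation preserves this profile, so there are always $\lambda\ge 2$ elements of multiplicity $\lambda-1\ge 1$ and, in particular, no element of multiplicity $0$. A classical SEDF in $\mathbb{Z}_{v+1}$ would instead require the profile of $\lambda(\mathbb{Z}_{v+1}\setminus\{0\})$: the identity with multiplicity $0$ and every other element with multiplicity $\lambda$ (the frequency is forced to be $\lambda$ by the total $k^2=\lambda v$). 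Comparing profiles gives the contradiction: one cannot collapse $\lambda\ge 2$ deficient elements into the single deficient element $0$, nor reconcile the deficient multiplicity $\lambda-1\ge 1$ with the required value $0$. Equivalently, $0$ always occurs as a difference, so the sets can never be made disjoint.

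The main obstacle is the very first step: cleanly establishing that set-translation cannot alter the multiplicity profile, only relabel it. Once this translation-invariance is in hand, both directions are short — part (i) amounts to placing the unique gap at the identity (with disjointness immediate), and part (ii) is a rigid profile comparison. The only points needing care are the bookkeeping of which element is missing via $\Delta(A,B)=-\Delta(B,A)$, and confirming that the part (i) translate coincides with Example~\ref{ex:PatSti}; no estimates are involved.
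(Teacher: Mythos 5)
Your proposal is correct. For part (i) it is essentially the paper's own proof: specialise Proposition~\ref{prop:extend} to $\lambda=1$, translate $A_{Y'}$ so that the unique missing difference lands on $0$ (which simultaneously forces disjointness), and recognise the resulting pair $\{0,\ldots,k-1\}$, $\{k,2k,\ldots,k^2\}$ as Example~\ref{ex:PatSti}. Your sign bookkeeping here is actually more careful than the paper's: the proof of Proposition~\ref{prop:extend} really computes $\Delta(A_{Y'},A_{X'})$, so the missing element of $\Delta(A_{X'},A_{Y'})$ is $-(v-k+1)\equiv k$, and the correct translate of $A_{Y'}$ is by $k$; taken literally, the proposition's stated deficient set would suggest translating by $v-k+1$, which does \emph{not} give disjoint sets, so your use of $\Delta(A,B)=-\Delta(B,A)$ is exactly the step the paper glosses over. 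Part (ii) is where you genuinely diverge. The paper argues on the sequence side: every translate of $A_{X'}$ is a cyclic interval of length $k$, so disjointness would require $Y'$ to contain a gap of length at least $k$, while its longest gap is $k-\lambda+1$, forcing $\lambda\le 1$. You instead use translation-invariance of the multiplicity profile: translating the two sets by $s$ and $t$ replaces $\Delta(A_{X'},A_{Y'})$ by its shift by $s-t$, and by Proposition~\ref{prop:extend} every multiplicity is at least $\lambda-1\ge 1$ when $\lambda>1$, so $0$ occurs as a difference after any translation and the sets can never be made disjoint; moreover the profile ($\lambda$ elements of multiplicity $\lambda-1$, the rest of multiplicity $\lambda$) can never be carried to that of $\lambda\left(G\setminus\{0\}\right)$. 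Both arguments are valid: yours is a purely multiset-theoretic proof that reuses Proposition~\ref{prop:extend}, avoids the paper's unproved (though easy) geometric claim that disjointness is equivalent to $Y'$ having a $k$-gap, and proves slightly more (even waiving disjointness, the required multiset is unattainable); the paper's gap-versus-block argument is shorter and makes the obstruction visually transparent.
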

\begin{proof}
By Proposition \ref{prop:extend}, in $\mathbb{Z}_{k^2+1}$ the sets $A_{X'}=\{0,1,\ldots,k-1\}, A_{Y'}=\{0, k,2k,\ldots,(k-1)k\}$ satisfy $\Delta(A_{X'}, A_{Y'})= G \setminus \{v-k+1\}$.
Take the cyclic shift $Y''=Y'+(v-k+1)$ of $Y'$.  The set in $\mathbb{Z}_{k^2+1}$ corresponding to this new sequence is $A_{Y''}=\{k,2k,\ldots, k^2\}$, the translate $k+A_{Y'}$ of $A_{Y'}$ .  The pair $\{X,Y''\}$ correspond to sets $A_X, A_{Y''}$ which are disjoint and satisfy $\Delta(A_X, A_{Y''})=\lambda (G \setminus \{0\})$ with $\lambda=1$. This is the $(k^2,2,k,1)$-SEDF in $\mathbb{Z}_{k^2+1}$ of Example \ref{ex:PatSti}.
For (ii), observe the sets of Theorem \ref{theorem:Ex4.8} can be made disjoint by translation in $\mathbb{Z}_{v+1}$, only if the sequence $Y'$ has a gap of at least the size of the block in $X'$. This is possible only if $k-\lambda+1 \geq k$, i.e. $\lambda \leq 1$.
\end{proof}

Similarly, it is not possible to convert the non-disjoint SEDFs of Theorem \ref{thm:manysets} to classical SEDFs in $\mathbb{Z}_{2^N+1}$, except when $N=2$ (giving the $(5,2,2,1)$-SEDF $\{0,1\}, \{2,4\}$ in $\mathbb{Z}_5$).  For $N>2$, appending $0$ gives a structure with more than two frequencies and disjointness is impossible.
 
\section{Motivation from communications systems}
\label{sec:OOCs}

While classical SEDFs arise from AMD codes, non-disjoint SEDFs and PSEDFs have a different communications motivation.  \emph{Optical orthogonal codes} (OOCs) are sets of binary sequences with good auto- and cross-correlation properties for use in optical multi-access communication. The \emph{auto-correlation} of a sequence $X$ measures how much it collides with its shifts; its \emph{cross-correlation} with sequence $Y$ measures how much $X$ collides with the shifts of $Y$ (two sequences \emph{collide} in position $i$ if both have $1$'s in the $i$th position).

\begin{defn}
\label{defn:OOCsequences}
Let $v, w, \lambda_a, \lambda_c$ be non-negative integers with $v \ge 2$, $w \ge 1$.  Let ${\mathcal{C}}=\{X_0, \ldots, X_{N-1}\}$ be a family of $N$ binary sequences of length $v$ and weight $w$.  Then ${\mathcal{C}}$ is a $(v,w,\lambda_a, \lambda_c)$-OOC of size $N \ge 1$ if, writing $X=(x_i)_{i=0}^{v-1}$, $Y =(y_i)_{i=0}^{v-1}$ (indices modulo $v$): 
\begin{itemize}
\item[(i)] $\displaystyle{\sum_{t=0}^{v-1}} x_t x_{t+\delta} \le \lambda_a \mbox{ for any } X \in {\mathcal{C}}, 0 < \delta \le v-1$, and
\item[(ii)] $\displaystyle{\sum_{t=0}^{v-1}} x_t y_{t+\delta} \le \lambda_c \mbox{ for any } X, Y \in {\mathcal{C}}, 0 \le \delta \le v-1$,
\end{itemize}
i.e. if auto-correlation values are at most $\lambda_a$ and cross-correlation values are at most $\lambda_c$.
\end{defn}

Although called ``codes", OOCs are used as sets of periodic sequences, with $X_i$ being repeated.  A correlation value gets a contribution of $1$ precisely if both sequences have a 1 in the same position.  In using OOCs for communication, information can be sent only when there is a 1 in the sequence; if two sequences are used and there is a 1 in both sequences then interference occurs, which can result in errors in both received signals.  So a key design principle is to have low cross-correlation values. For more on OOCs see \cite{ChuSalWei}.  

By Definition \ref{def:set_sequence}, OOCs can be reformulated as subsets of $\mathbb{Z}_v$.  Let $\{ X_0,\ldots, X_{N-1}\}$ be a $(v,w,\lambda_a, \lambda_c)$-OOC.  For each sequence $X_i$, let $A_i$ be the set of integers modulo $v$ denoting the positions of the $1$s.  Then $A_i \subseteq \mathbb{Z}_v$, $|A_i| = w$ for all $0 \leq i \leq N-1$, and we have the conditions:
\begin{enumerate}
\item[(i)] $| A_i \cap (A_i + \delta) | \le \lambda_a$ for all $\delta \in
  \mathbb{Z}_v\setminus \{0\}$, i.e. any non-zero $\delta$ occurs in $\Delta(A_i)$ at most $\lambda_a$   times.
\item[(ii)] $| A_i \cap (A_j + \delta) | \le \lambda_c$ for all $\delta \in   \mathbb{Z}_v$, i.e. any  $\delta$ occurs in $\Delta(A_i, A_j)$ at most $\lambda_c$ times.
\end{enumerate}

An OOC with $\lambda_c=1$ and no auto-correlation requirement is a \emph{conflict-avoiding code} (CAC); see \cite{ShuWonChe}.  CACs are equivalently defined by the condition that $\{ \Delta(A_1), \ldots, \Delta(A_n)\}$ are pairwise disjoint (distinct $x_1,x_2 \in A_i$ and $y_1,y_2 \in A_j$ ($i \neq j$) with $x_1-x_2=y_1-y_2$ implies two distinct expressions for $x_1-y_1$ in $\Delta(A_i,A_j)$, and conversely).

\begin{prop} \label{prop:lcbound}
If $\mathcal{C}$ is a $(v,w, \lambda_a, \lambda_c)$-OOC with $|{\mathcal{C}}| \ge 2$, then
$\lambda_{c} \ge \frac{w^2}{v}$.
\end{prop}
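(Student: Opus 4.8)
The plan is to exploit a counting (averaging) argument on the cross-correlation values. Fix two distinct sequences $X,Y \in \mathcal{C}$, corresponding to $w$-subsets $A,B$ of $\mathbb{Z}_v$. By Proposition \ref{prop:basic}(i), the quantity $\sum_{t=0}^{v-1} x_t y_{t+\delta}$ equals the number of occurrences of $\delta$ in the multiset $\Delta(B,A)$. Summing this over all shifts $\delta \in \{0,1,\ldots,v-1\}$ therefore counts the total size of the multiset $\Delta(B,A)$, which is exactly $|A|\cdot|B| = w^2$ (every ordered pair $(b,a)$ with $b \in B$, $a \in A$ contributes one difference).

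Next I would interchange the order of summation to confirm this directly: $\sum_{\delta=0}^{v-1}\sum_{t=0}^{v-1} x_t y_{t+\delta} = \sum_{t=0}^{v-1} x_t \sum_{\delta=0}^{v-1} y_{t+\delta} = \sum_{t=0}^{v-1} x_t \cdot w = w \cdot w = w^2$, since for each fixed $t$ the inner sum runs $y_{t+\delta}$ over all $v$ cyclic positions and so equals the weight $w$ of $Y$, and then $\sum_t x_t = w$. This establishes that the $v$ cross-correlation values (one for each $\delta$) sum to $w^2$.

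The averaging step finishes the argument: the average of the $v$ cross-correlation values is $w^2/v$, so the maximum value is at least $w^2/v$. Since every cross-correlation value is at most $\lambda_c$ by Definition \ref{defn:OOCsequences}(ii), we conclude $\lambda_c \ge w^2/v$. The hypothesis $|\mathcal{C}| \ge 2$ is exactly what guarantees the existence of a distinct pair $X,Y$ for which the cross-correlation condition (ii) applies, so the bound is well-defined.

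I do not anticipate a serious obstacle here; the only point requiring mild care is the bookkeeping in the double sum (ensuring the indices are read modulo $v$ and that the inner sum genuinely ranges over all residues, giving weight $w$ regardless of $t$). The result is essentially the observation that a nonnegative integer sequence summing to $w^2$ over $v$ terms must have a term of size at least the average $w^2/v$.
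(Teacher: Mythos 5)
Your proof is correct and is essentially the same argument as the paper's: both double-count the $w^2$ external differences between the two sets (you via the correlation sums $\sum_{\delta}\sum_t x_t y_{t+\delta}=w^2$, the paper via the set $F=\{((x,y),\delta):x-y=\delta\}$) and then observe that these are distributed over $v$ values of $\delta$, each occurring at most $\lambda_c$ times. The only difference is cosmetic: you phrase the count in sequence/correlation language with an averaging step, while the paper phrases it directly in set/difference language.
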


\begin{proof}
Let $\mathcal{C}=\{A_0, \ldots, A_{N-1}\}$ as subsets of $\mathbb{Z}_v$. 
Let $F= \{((x, y), \delta) \; : \; x-y=\delta, x \in A_i, y \in A_j\}$ for some $A_i$, $A_j$, $A_i \neq A_j$.  There are $w$ values of $x$ and $w$ values of $y$, and for each pair of 
$(x,y)$ there is a unique $\delta=x-y$, so $|F| = w^2$.  On the other hand there
are $v$ possible values of $\delta$, and at most $\lambda_c$ pairs of $(x, y)$
such that $x-y = \delta$. So $|F| \le v \lambda_c$.  
\end{proof}

The lower bound is met when every $\delta$ occurs exactly ${w^2}/{v}$ times as an external difference.  If $\lambda_c = {w^2}/{v}$ for an OOC, then, since all cross-correlation values are at most $\lambda_c$ and at least ${w^2}/{v}$, we must have the cross-correlation equal to ${w^2}/{v}$ for all pairs of sequences, i.e. $\Delta(A_i, A_j) = \lambda_c \mathbb{Z}_v$ for all $A_i \neq A_j$.  Hence OOCs with cross-correlation values meeting the lower bound are in fact PSEDFs in $\mathbb{Z}_v$, and the PSEDFs in Section \ref{sec:PSEDFs} give examples of these OOCs.  In general, $(v, m, k, \lambda)$-PSEDFs in $\mathbb{Z}_v$ are $(v, k, \lambda_a, \lambda)$-OOCs for some $\lambda_a$, and $(v, k, \lambda_a, \lambda_c)$-OOCs are $(v,m, k, \lambda_c)$-PSEDFs if all cross-correlation values of the OOCs equal $\lambda_c$.  The extensions of PSEDFs from Proposition \ref{prop:extend} will give $(v+1, k, \lambda_a, \lambda_c)$-OOCs with $\lambda_c = \lceil \frac{k^2}{v+1} \rceil$, also best-possible. 

In OOC applications, auto-correlation aids synchronisation; minimising $\lambda_a$ is not a goal (\cite{ChuSalWei}). It is upper-bounded by the weight $w$ of the sequences; this bound is attained if a sequence $(x_i)_{i=0}^{v-1}$ satisfies $(x_{i+r})_{i=0}^{v-1}=(x_{i})_{i=0}^{v-1}$ for some $0<r<v$. In Theorem \ref{theorem:Ex4.8},  $\lambda_a=k$ since $Y$ satisfies $(y_{t+k})=(y_t)$ where $0<k<v$; in Proposition \ref{prop:extend},  $Y'$ has auto-correlation values strictly less than $k$, while $X'$ has auto-correlation exactly $k-1$, so $\lambda_a = k-1$.



\begin{thebibliography}{1}
\bibitem{BaJiWeZh} Bao, J., Ji, L., Wei, R. and Zhang, Y.: New existence and nonexistence results for strong external difference families, Discrete Math. 341, 1798-1805 (2018).
\bibitem{ChuSalWei} Chung, F. R. K., Salehi, J. A., and Wei, V. K.: Optical orthogonal codes: design, analysis and applications, IEEE Trans. Inform. Theory 35, 595-604  (1989). 
\bibitem{HucJefNep} Huczynska, S., Jefferson, C. and Nepšinská, S.: Strong external difference families in abelian and non-abelian groups, Cryptogr. Commun. 13, 331–341  (2021).
\bibitem{HucPat} Huczynska, S. and Paterson, M. B.: Existence and non-existence results for strong external difference families, Discrete Math. 341, 87-95 (2018).
\bibitem{JedLi} Jedwab, J. and Li, S.: Construction and nonexistence of strong external difference families, J. Algebraic Comb. 49, 21-48 (2019).
\bibitem{Leung1} Leung, K. H,  Li, S. and Prabowo, T. F.: Nonexistence of strong external difference families in abelian groups of order being product of at most three primes, J. Combin. Theory Ser. A 178, Paper No. 105338, 27 pp. (2021).
\bibitem{Leung2} Leung, K. H. and Prabowo, T. F.: Some nonexistence results for $(v, m, k, pq)$-strong external difference families, J. Combin. Theory Ser. A 187, Paper No. 105575, 26 pp. (2022).
\bibitem{MarSti} Martin, W. J. and Stinson, D. R.: Some nonexistence results for strong external difference families using character theory, Bull. Inst. Combin. Appl., 80 , 79-92 (2017).
\bibitem{Men} Menezes, A. J., van Oorschot, P. C., and Vanstone, S. A.: Handbook of Applied Cryptography,  CRC Press Series on Discrete Mathematics and its Applications. CRC Press, Boca Raton, FL (1997).
\bibitem{OgaKurStiSai} Ogata, W., Kurosawa, K., Stinson, D. R., and Saido, H.: New combinatorial designs and their applications to authentication codes and secret sharing schemes, Discrete Math. 279, 383–405  (2004).
\bibitem{PatSti} Paterson, M. B., and Stinson, D.R.: Combinatorial characterizations of algebraic manipulation detection codes involving generalized difference families, Discrete Math. 339, 2891–2906 (2016).
\bibitem{ShuWonChe} Shum, K.W., Wong, W.S., Chen, C.S.: A General Upper Bound on the Size of Constant-Weight Conflict-Avoiding Codes, IEEE Trans. Inform. Theory, 56, 3265-3276 (2010).
\bibitem{Wen} Wen, J., Yang, M., Fu, F. and Feng, K.: Cyclotomic construction of strong external difference families in finite fields, Des. Codes Cryptogr. 86, 1149-1159 (2018).
\end{thebibliography}
\end{document}